\newtheorem{theorem}{Theorem}[section]
\newtheorem{lemma}[theorem]{Lemma}
\newtheorem{corollary}[theorem]{Corollary}
\newtheorem*{thmA}{Theorem A}
\newtheorem*{thmB}{Theorem B}
\theoremstyle{remark}
\newtheorem{remark}[theorem]{Remark}
\numberwithin{equation}{section}
\begin{document}
\title[Supersolvable subgroups of order divisible by $3$]{Supersolvable subgroups of order divisible by $3$}
    \author[Beltr\'an and Shao]{Antonio Beltr\'an\\
     Departamento de Matem\'aticas\\
      Universitat Jaume I \\
     12071 Castell\'on\\
      Spain\\
     \\Changguo Shao \\
College of Science\\ Nanjing University of Posts and Telecommunications\\
     Nanjing 210023 Yadong\\
      China\\
     }

 \thanks{Antonio Beltr\'an: abeltran@uji.es ORCID ID: https://orcid.org/0000-0001-6570-201X \newline
 \indent Changguo Shao: shaoguozi@163.com ORCID ID: https://orcid.org/0000-0002-3865-0573}

\keywords{supersolvable groups, maximal subgroups, simple groups}

\subjclass[2010]{20D05, 20E28}

\begin{abstract}
We determine the structure of the finite non-solvable groups of order divisible by $3$ all whose maximal subgroups of order divisible by $3$ are supersolvable. Precisely, we demonstrate that if $G$ is a finite non-solvable group  satisfying the above condition on maximal subgroups, then either $G$ is a $3'$-group or $G/{\bf O}_{3'}(G)$ is isomorphic to ${\rm PSL}_2(2^p)$ for an odd prime $p$, where ${\bf O}_{3'}(G)$ denotes the largest normal $3'$-subgroup of $G$. Furthermore, in the latter case,  ${\bf O}_{3'}(G)$ is nilpotent and ${\bf O}_2(G)\leq {\bf Z}(G)$.
\end{abstract}

\maketitle

\section{Introduction}

In this paper, all groups are assumed to be finite, and we follow standard notation (e.g.
 \cite{Robinson}). The structure of a finite group is influenced  to a  large extent by the properties of some or all of its maximal subgroups.
 Notable instances of this phenomenon  include the solvability of a group having an odd-order nilpotent maximal subgroup, and the  characterization of the minimal non-nilpotent groups and the minimal non-supersolvable groups. Related to supersolvability, in recent years there has been growing interest in studying groups  that possess specific supersolvable subgroups. For instance, using the Feit-Thompson Theorem and the fact that minimal non-2-nilpotent groups are solvable, it easily follows that groups whose subgroups of even order are supersolvable are solvable too. Further properties of such groups are provided in \cite{Meng}.  Another example appears in \cite{Ballester2}, where  groups  in which every  maximal subgroup is supersolvable or normal are studied. Likewise, groups with less than six non-supersolvable subgroups are  proved to be solvable  in \cite{Ballester}.

   \medskip
   The aim of this paper is to further extend the class of minimal non-supersolvable  groups. We seek to investigate whether it is possible to determine the structure of groups whose maximal subgroups of order divisible by a fixed odd prime $p$ are all supersolvable. Of course, such groups need  neither  be solvable nor $p$-solvable, even when $p$ divides their orders. In fact, ${\rm PSL}_2(8)$, ${\rm Sz}(8)$ and ${\rm PSL}_2(7)$ are examples of simple groups satisfying the aforementioned conditions for $p=3,5$ and $7$, respectively.

  \medskip
  The initial step in addressing the problem posed is to be able to identify the non-abelian simple groups  that satisfy the stated hypotheses. We determine these groups when $p=3$.

 \begin{thmA}
 Let $G$ be a finite  non-abelian simple group such that every maximal subgroup of order divisible by $3$ is supersolvable. Then $G$ is isomorphic to ${\rm Sz}(2^{2n+1})$ with $n \geq 1$, or to ${\rm PSL}_2(2^p)$ with $p$ an odd prime.
\end{thmA}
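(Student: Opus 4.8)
The plan is to invoke the classification of finite simple groups. A convenient preliminary reformulation is that, for any finite group, the requirement that every maximal subgroup of order divisible by $3$ be supersolvable is equivalent to the requirement that every proper subgroup of order divisible by $3$ be supersolvable: a proper subgroup $K$ with $3\mid|K|$ lies in some maximal subgroup $M$, which then has $3\mid|M|$, hence is supersolvable, whence so is $K$. Thus, to prove that a simple group $G$ violates the hypothesis it suffices to exhibit a single \emph{proper} subgroup of $G$ that is non-supersolvable and of order divisible by $3$; convenient witnesses are $A_4$ (order $12$), $\mathrm{SL}_2(3)$ (order $24$), $\mathrm{PSU}_3(2)$ (order $72$), and any non-abelian simple subgroup of order divisible by $3$ that is properly contained in $G$.

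I would first dispose of the two families in the conclusion. If $3\nmid|G|$, then inspection of the orders of the non-abelian simple groups forces $G\cong\mathrm{Sz}(2^{2n+1})$ for some $n\geq1$, and the hypothesis holds vacuously. If $G\cong\mathrm{PSL}_2(q)$ with $3\mid|G|$, I would use Dickson's description of the subgroups of $\mathrm{PSL}_2(q)$: for $q$ odd with $q\geq5$ the normalizer of a Klein four-subgroup is isomorphic to $A_4$ or $S_4$ and hence contains $A_4$, witnessing failure; for $q=2^f$ with $f$ composite a subfield subgroup $\mathrm{PSL}_2(2^e)$ with $2\leq e<f$ is non-solvable of order divisible by $3$, again witnessing failure; and the remaining even case $q=4$ gives $\mathrm{PSL}_2(4)\cong A_5\supseteq A_4$. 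What survives is exactly $q=2^p$ with $p$ an odd prime; for these $3\mid|G|$ since $3\mid2^p+1$, and because $3\nmid2^p-1$ every subgroup of order divisible by $3$ is cyclic, dihedral, or contained in $\mathrm{PSL}_2(2)\cong S_3$, all of which are supersolvable, so the hypothesis holds.

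It remains to eliminate every other non-abelian simple group $G$ with $3\mid|G|$, which I would do by producing in each a proper non-supersolvable subgroup of order divisible by $3$. For the alternating groups take the point stabilizer $A_{n-1}<A_n$ when $n\geq6$, and $A_4<A_5$ when $n=5$. For a simple group of Lie type over $\mathbb{F}_q$ of rank at least $2$, or for $\mathrm{PSU}_3(q)$, I would take a subgroup isomorphic to $\mathrm{SL}_2(q)$ or $\mathrm{PSL}_2(q)$—a long-root $A_1$-subgroup in the former case, the subgroup $\mathrm{SU}_2(q)\cong\mathrm{SL}_2(q)$ stabilizing a non-isotropic vector in the latter: it is proper, and it is non-solvable of order divisible by $3$ when $q\geq4$, while for $q=3$ the group $\mathrm{SL}_2(3)$ is non-supersolvable of order $24$; for $q=2$ one uses instead a copy of $\mathrm{SL}_3(2)\cong\mathrm{PSL}_2(7)$ (order $168$) when the root system admits an $A_2$-subsystem, the few remaining small configurations being settled by exceptional isomorphisms with groups already handled (for instance $\mathrm{PSp}_4(2)'\cong A_6$, $\mathrm{PSU}_4(2)\cong\mathrm{PSp}_4(3)$) or from their lists of maximal subgroups. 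The Ree family ${}^2G_2(q)$ with $q=3^{2n+1}\geq27$ is handled via its maximal subgroup $2\times\mathrm{PSL}_2(q)$, and ${}^2B_2(q)=\mathrm{Sz}(q)$ has order prime to $3$. Finally, for each of the $26$ sporadic groups the known list of maximal subgroups furnishes a non-solvable one of order divisible by $3$.

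The main obstacle is not any single step but the uniform and exhaustive treatment of the groups of Lie type: one must confirm, across all ranks, characteristics and twisted forms, that a proper non-supersolvable subgroup of order divisible by $3$ is always available, and this is most delicate in defining characteristic $2$ over the prime field, where the natural $A_1$-subgroup $\mathrm{SL}_2(2)\cong S_3$ is supersolvable, so one is forced to descend into the explicit subgroup structure. A secondary point demanding care is the $\mathrm{PSL}_2(q)$ analysis itself—separating $q$ even with composite exponent, $q$ an odd prime, and $q$ an odd prime power—together with the verification that for $q=2^p$ with $p$ an odd prime no non-supersolvable subgroup of order divisible by $3$ exists after all.
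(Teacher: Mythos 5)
Your proposal is correct in substance and reaches the conclusion by a route that is organized quite differently from the paper's. The paper factors the elimination step through minimal simple groups: it first proves a variant of the Barry--Ward theorem (every non-abelian simple group not isomorphic to ${\rm Sz}(q)$ contains a minimal simple subgroup of order divisible by $3$), so that the hypothesis only has to be tested against Thompson's list of minimal simple groups, and the surviving family ${\rm PSL}_2(2^p)$ is then verified using the list of its maximal subgroups in Bray--Holt--Roney-Dougal. You skip Thompson and Barry--Ward entirely: after the (correct, and also noted by the paper) observation that the maximal-subgroup hypothesis is equivalent to supersolvability of \emph{all} proper subgroups of order divisible by $3$, you exhibit in every other simple group a much weaker witness --- any non-supersolvable subgroup of order divisible by $3$, such as $A_4$, ${\rm SL}_2(3)$, a long-root ${\rm SL}_2(q)$, or ${\rm SL}_3(2)$ --- and you verify ${\rm PSL}_2(2^p)$ via Dickson's subgroup theorem rather than via maximal-subgroup tables. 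What your route buys is flexibility: the witnesses are easier to produce than minimal simple subgroups, and no appeal to Thompson's classification is needed. What the paper's route buys is a cleaner division of labour (the hypothesis is checked only against five explicit families) plus a statement of independent interest (its Theorem 3.1). Both approaches run into the same delicate spot, which you correctly identify: groups of Lie type in characteristic $2$ over the prime field, where the natural $A_1$-subgroup is ${\rm SL}_2(2)\cong S_3$ and one must descend to subsystem subgroups, exceptional isomorphisms (${\rm PSU}_4(2)\cong{\rm PSp}_4(3)$, ${\rm PSp}_4(2)'\cong A_6$, ${}^3D_4(q)>G_2(q)$, ${\rm P\Omega}_8^-(2)>{\rm PSp}_6(2)>{\rm PSL}_2(7)$, etc.) or explicit maximal-subgroup data; your sketch leaves these ``few remaining small configurations'' to be checked case by case, exactly as the paper does with citations to the BHR tables, the Atlas and Wilson, so this is a matter of filling in references rather than a gap in the method.
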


    The fact that the Suzuki groups  of Lie type, ${\rm Sz}(q)$, are the only non-abelian simple groups whose order  is not divisible by $3$ allows us to reduce the arguments  required to establish the structure of the non-solvable groups  under study when $p=3$.
  It  may appear unexpected that ${\rm Sz}(q)$  is not implicated in  the structure of the groups of our main result, Theorem B. We denote by ${\bf F}(G)$ the Fitting subgroup of $G$, by $\Phi(G)$, the Frattini subgroup, and if $\pi$ is a set of prime numbers, then ${\bf O}_\pi(G)$  denotes the $\pi$-radical of $G$, that is, the largest normal $\pi$-subgroup of $G$.

\begin{thmB}

 Let $G$ be a finite non-solvable group of order divisible by $3$ such that every maximal subgroup of order divisible by $3$ is supersolvable.  Then ${\bf F}(G)=\Phi(G) ={\bf O}_{3'}(G)$ and
                                    $G/{\bf O}_{3'}(G)  \cong  {\rm PSL}_2(2^p)$, with $p$ an odd prime. Furthermore, ${\bf O}_2(G)\leq {\bf Z}(G)$.
\end{thmB}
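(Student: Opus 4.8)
The plan is to argue by induction on $|G|$. Two preliminary facts are used throughout. First, the hypothesis passes to every subgroup and every quotient of $G$: a maximal subgroup of a quotient $G/N$ of order divisible by $3$ has the form $M/N$ with $M$ maximal in $G$ and $3\mid|M|$, hence supersolvable; and a maximal subgroup $K$ of a subgroup $H\le G$ with $3\mid|K|$ lies in a maximal subgroup of $G$, which then has order divisible by $3$ and is supersolvable, so $K$ is supersolvable. Consequently, applying Theorem A to the non-abelian composition factors of $G$ (simple groups which inherit the hypothesis), each of them is isomorphic to ${\rm Sz}(2^{2n+1})$ or to ${\rm PSL}_2(2^p)$ with $p$ an odd prime. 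Second, the \emph{Key Lemma}: every \emph{proper} subgroup of $G$ of order divisible by $3$ is supersolvable, because it is contained in a maximal subgroup of $G$ of order divisible by $3$.

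The crucial --- and hardest --- step is to exclude Suzuki composition factors: if $3\mid|G|$ then $G$ has no composition factor isomorphic to any ${\rm Sz}(2^{2n+1})$; this is why the Suzuki groups drop out of the conclusion. The mechanism is that for $q=2^{2n+1}\ge 8$ the Borel subgroup of ${\rm Sz}(q)$, a Frobenius group $U\rtimes C_{q-1}$ with $|U|=q^2$, is \emph{not} supersolvable --- the action of $C_{q-1}$ on ${\bf Z}(U)\cong(\mathbb F_q,+)$ is irreducible of $\mathbb F_2$-dimension $2n+1\ge 3$, producing a non-cyclic chief factor. Hence a Suzuki composition factor produces a non-supersolvable subgroup of $G$ of order prime to $3$, namely the preimage of such a Borel in a subnormal subgroup realising the factor, and this preimage is \emph{properly} contained in that subnormal subgroup. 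Since $3\mid|G|$ while $3\nmid|{\rm Sz}(q)|$, one builds from this subgroup --- or from a diagonal subgroup $\cong{\rm Sz}(q)$ together with an element of order $3$ drawn from $G$ --- a \emph{proper} non-supersolvable subgroup of $G$ of order divisible by $3$, contradicting the Key Lemma. Carrying this out requires an induction on $|G|$ and an inspection of a minimal normal subgroup $N$: when $N$ is abelian one shows $3\nmid|G/N|$ (else the induction hypothesis applies to $G/N$), so $N={\bf O}_3(G)$ is the full Sylow $3$-subgroup and, writing $G=N\rtimes H$, the preimage in $G$ of a proper subgroup of $H$ that maps onto a Borel of the Suzuki factor is a proper non-supersolvable subgroup of order divisible by $3$; when $N=S^k$ is non-abelian one uses a direct factor of $N$ (if $S\cong{\rm PSL}_2(\cdot)$ and $k\ge 2$), a diagonal subgroup (if $S$ is a Suzuki group), or the centraliser ${\bf C}_G(S)$, which is then forced to carry the Suzuki factor (if $k=1$). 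A few small configurations escape the Key Lemma --- essentially $G\cong C_3\times{\rm Sz}(q)$ and $G\cong{\rm Sz}(q)\rtimes C_3$ --- and must be ruled out directly by exhibiting a non-supersolvable maximal subgroup of order divisible by $3$, namely $C_3\times(U\rtimes C_{q-1})$ and the normaliser of a Borel subgroup, respectively. I expect this to be the main obstacle of the whole proof, owing to the case distinctions on how the $3$-part of $|G|$ is positioned relative to the subnormal subgroup realising the Suzuki factor.

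Once Suzuki factors are excluded, every non-abelian composition factor of $G$ is some ${\rm PSL}_2(2^{p_i})$, and the induction is completed as follows. If $R:={\bf O}_{3'}(G)\neq 1$, then $G/R$ is non-solvable (a non-abelian composition factor has order divisible by $3$ and so is not a composition factor of the $3'$-group $R$), satisfies the hypothesis, and has smaller order, so by induction and ${\bf O}_{3'}(G/{\bf O}_{3'}(G))=1$ we get $G/R\cong{\rm PSL}_2(2^p)$. If $R=1$ but ${\bf O}_3(G)\neq 1$, the induction hypothesis applied to $G/{\bf O}_3(G)$ yields a normal subgroup $K\ge{\bf O}_3(G)$ with $G/K\cong{\rm PSL}_2(2^p)$; then the preimage in $G$ of a Borel subgroup of ${\rm PSL}_2(2^p)$ is a maximal subgroup of $G$ of order divisible by $3$ (since $3\mid|{\bf O}_3(G)|$ divides $|K|$), hence supersolvable, yet it maps onto a Borel subgroup of ${\rm PSL}_2(2^p)$, which is not supersolvable (the cyclic torus of order $2^p-1$ acts irreducibly on the unipotent radical $\cong\mathbb F_2^p$ and $p\ge 3$) --- a contradiction; so ${\bf O}_3(G)=1$ as well. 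In that case ${\bf F}(G)=1$, hence ${\bf F}^*(G)={\bf E}(G)\neq 1$ is a direct product of components, each quasisimple with simple quotient some ${\rm PSL}_2(2^{p_i})$; since these have trivial Schur multiplier, every component equals ${\rm PSL}_2(2^{p_i})$, is a subgroup of $G$ of order divisible by $3$ that is not supersolvable, and so by the Key Lemma is not proper --- giving $G\cong{\rm PSL}_2(2^p)$. In every case, $G/{\bf O}_{3'}(G)\cong{\rm PSL}_2(2^p)$.

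Finally, the structural assertions are read off from $G/R\cong{\rm PSL}_2(2^p)$, with $R={\bf O}_{3'}(G)$. From $\Phi({\rm PSL}_2(2^p))=1$ we get $\Phi(G)\le R$; conversely, if a maximal subgroup $M$ of $G$ did not contain $R$ then $MR=G$, so $|M|=|M\cap R|\cdot|{\rm PSL}_2(2^p)|$ is divisible by $3$ and $M$ is supersolvable, contradicting $M/(M\cap R)\cong G/R\cong{\rm PSL}_2(2^p)$. Hence $R\le\Phi(G)$ and $\Phi(G)=R$; being a Frattini subgroup, $R$ is nilpotent, so $R\le{\bf F}(G)$, while ${\bf F}(G)$ maps into ${\bf F}(G/R)={\bf F}({\rm PSL}_2(2^p))=1$; thus ${\bf F}(G)=\Phi(G)=R={\bf O}_{3'}(G)$ (in particular ${\bf O}_3(G)=1$). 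For the last statement, write $R=Q\times U$ with $Q={\bf O}_2(G)$ and $U={\bf O}_{\{2,3\}'}(G)$ (possible since $R$ is nilpotent), and choose $x_1,\dots,x_t\in G$ of order $3$ whose images generate ${\rm PSL}_2(2^p)$ (this group is generated by its elements of order $3$), so that $G=\langle R,x_1,\dots,x_t\rangle$. For each $j$ the subgroup $\langle Q,x_j\rangle$ has order divisible by $3$ and is proper (its image in $G/R$ is cyclic of order $3$), hence lies in a maximal --- therefore supersolvable --- subgroup $M_j$, which contains $Q$ (as $Q\le R=\Phi(G)\le M_j$); since $x_j$ has order prime to $2$ and $M_j$ is supersolvable, $x_j$ acts trivially on every factor of an $M_j$-chief series through $Q$ (each such factor is cyclic of order $2$ and ${\rm Aut}(C_2)=1$), so a standard coprime-action argument gives $[Q,x_j]=1$. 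Since $U$ centralises $Q$ too and $\langle U,x_1,\dots,x_t\rangle\cdot Q=\langle R,x_1,\dots,x_t\rangle=G$ with $Q\le\Phi(G)$, the Frattini argument forces $\langle U,x_1,\dots,x_t\rangle=G$; this subgroup centralises $Q$, so ${\bf O}_2(G)=Q\le{\bf Z}(G)$, which finishes the proof.
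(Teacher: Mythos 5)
The decisive step of your plan---showing that a group satisfying the hypothesis has no Suzuki composition factor---is precisely where the proposal stops being a proof. After your reductions (abelian minimal normal subgroup handled by induction or by Doerk, non-abelian minimal normal subgroup with $3$ dividing the order of the simple factor forcing $N=G$), everything hinges on the case of a minimal normal subgroup $N=S^k$ with $S\cong{\rm Sz}(q)$, and there your device is ``a diagonal subgroup $\cong{\rm Sz}(q)$ together with an element of order $3$ drawn from $G$''. But nothing guarantees that $\langle D,x\rangle$ is proper: an order-$3$ element of $G$ need not normalize any (possibly twisted) diagonal of $N$, you do not control $\langle D,x\rangle\cap N$, and the construction does not even make sense when $k=1$. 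Likewise the claim that the only configurations escaping the Key Lemma are ``essentially $C_3\times{\rm Sz}(q)$ and ${\rm Sz}(q)\rtimes C_3$'' is asserted, not derived, and is not obviously exhaustive: one must at least handle $k\ge 2$ with $3$-elements permuting the factors while simultaneously inducing field automorphisms, the $3$-part of $G$ lying inside ${\bf C}_G(N)$ (possibly non-split over it), and $|G:N{\bf C}_G(N)|$ with $3$-part larger than $3$. You yourself flag this as the main obstacle of the whole theorem; as written it is a genuine gap, not a sketch that routinely fills in.

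The paper closes exactly this gap with one uniform argument you could adopt: for the relevant minimal normal subgroup $\overline L\le\overline{{\bf O}_{3'}(G)}$ of $\overline G=G/S(G)$ (a direct product of copies of ${\rm Sz}(q)$), take $\overline{P_0}\in{\rm Syl}_2(\overline L)$ and use the Frattini argument $\overline G=\overline L\,{\bf N}_{\overline G}(\overline{P_0})$. Since $3\nmid|\overline L|$ and $\overline{P_0}$ is not normal, this gives in one stroke a \emph{proper} subgroup of order divisible by $3$ containing ${\bf N}_{\overline S}(\overline P)=\overline P\rtimes C_{q-1}$, which is not supersolvable (Lemma \ref{lemma2}); this contradicts your Key Lemma with no case distinctions on $k$, on ${\bf C}_G(N)$, or on where the $3$-part of $G$ sits. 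Beyond this point your architecture genuinely differs from the paper's (induction on $|G|$ plus ${\bf F}^*(G)$/component theory and Schur multipliers, versus the paper's direct passage to $G/S(G)$, the equality $S(G)={\bf O}_{3'}(G)$, and simplicity of $\overline G$), and those remaining steps---the appeal to Theorem A, the Frattini-cover argument giving ${\bf F}(G)=\Phi(G)={\bf O}_{3'}(G)$, and your chief-series/coprime-action proof of ${\bf O}_2(G)\le{\bf Z}(G)$ (the paper argues instead that in the supersolvable group ${\bf O}_2(G)P$ the Sylow $3$-subgroup is normal)---are essentially correct, if heavier than necessary; but they do not compensate for the missing core.
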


In order to prove our results, we make use of several results based on the Classification of Finite Simple Groups. More precisely,
we appeal to a variant of a result  \cite[Theorem 1]{Barry} that, at first sight, seems obvious, but it is not: Every non-abelian simple group contains a subgroup which is itself a minimal simple group.
Similarly, we require information on the subgroup structure and the maximal subgroups of certain simple groups, for which we refer to distinct sources, namely \cite{BHR, Con,  HB3,  Lieb2, Wil}.

\medskip
We should note that the problem has not been addressed for any other odd prime number $p\neq 3$ because the set of simple groups that have $p'$-order can be much larger, and even currently indeterminate for groups of Lie type.

\section{Preliminaries}

Recall that a minimal simple group is a non-abelian simple group all of whose proper subgroups  are solvable. The classification of minimal simple groups  is a classic result due to Thompson, which is needed for our purposes.

\begin{lemma}[\cite{Thom}] \label{Thom} Let $G$ be a minimal simple group. Then $G$ is isomorphic to one of the following:
\begin{itemize}
\item[(1)] ${\rm PSL}_3(3)$;
\item[(2)] the Suzuki simple group ${\rm Sz}(2^p)$, where $p$ is an odd prime;
\item[(3)] ${\rm PSL}_2(p)$, where $p$ is a prime with $p>3$ and $5   \nmid $ $ p^2-1$;
\item[(4)] ${\rm PSL}_2(2^p)$, where $p$ is a prime;
\item[(5)] ${\rm PSL}_2(3^p)$, where $p$ is an odd prime.
\end{itemize}

\end{lemma}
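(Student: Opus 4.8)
The lemma is Thompson's classification of minimal simple groups, so the honest answer is that I would invoke \cite{Thom}: this is one of the landmark results of pre-Classification finite group theory and no short self-contained proof exists. It is nonetheless worth recording its shape. The statement is a corollary of Thompson's classification of \emph{$N$-groups} (groups in which the normalizer of every nonidentity solvable subgroup is solvable). A minimal simple group $G$ is automatically an $N$-group: if $1\neq H\leq G$ is solvable then $H\neq G$ (otherwise $G$ is solvable) and $N_G(H)\neq G$ (otherwise $1\neq H\trianglelefteq G$ contradicts simplicity), so $N_G(H)$ is a proper --- hence solvable --- subgroup. Conversely, inside a non-abelian simple group any minimal non-solvable subgroup is itself non-abelian simple; hence the minimal simple groups are exactly the simple $N$-groups with no proper non-abelian simple subgroup, and the lemma is obtained from Thompson's $N$-group list by deleting those entries that contain a smaller non-abelian simple group.

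Thompson's argument for the $N$-group theorem proceeds by a long and intricate study of the $2$-local and $p$-local structure of $G$. The main device is the Thompson subgroup $J(S)$ of a Sylow $2$-subgroup $S$, together with factorizations of the (rough) shape $G=N_G(J(S))\,C_G({\bf Z}(S))$ and the analysis of their failure via failure-of-factorization $\mathbb{F}_2$-modules, supplemented by signalizer-type arguments and extensive case elimination. The outcome is that $S$ must be dihedral, abelian, of Suzuki type, or one of a short list of small $2$-groups (including the Sylow $2$-subgroup of ${\rm PSL}_3(3)$). One then feeds each case into a recognition theorem: dihedral Sylow $2$-subgroups yield ${\rm PSL}_2(q)$ ($q$ odd) or $A_7$ (Gorenstein--Walter); abelian ones yield ${\rm PSL}_2(2^n)$, ${\rm PSL}_2(q)$ with $q\equiv\pm3\pmod 8$, ${}^2G_2(3^{2n+1})$, or $J_1$ (Walter); Suzuki-type ones yield ${\rm Sz}(q)$ (Suzuki); and the leftover configuration yields ${\rm PSL}_3(3)$.

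Intersecting each family with minimal simplicity then forces exactly items (1)--(5): for ${\rm PSL}_2(q)$ all proper subfield subgroups must be solvable and no copy of $A_5\cong{\rm PSL}_2(4)\cong{\rm PSL}_2(5)$ may occur, which leaves $q=p$ prime with $5\nmid p^2-1$, or $q=2^p$ with $p$ prime, or $q=3^p$ with $p$ an odd prime; ${\rm Sz}(2^a)$ requires $a$ an odd prime, as ${\rm Sz}(2^b)\leq{\rm Sz}(2^a)$ whenever $b\mid a$ with $b>1$ odd; and $A_7\supset A_5$, ${}^2G_2(3^{2n+1})\supseteq{\rm PSL}_2(8)$ (for $n\geq1$), $J_1\supset{\rm PSL}_2(11)\supset A_5$ are discarded.

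The genuine obstacle is simply that none of this is short: the $2$-local analysis pinning down $S$ and several of the recognition theorems it relies on are each major undertakings. Within the present paper there is no sensible alternative to citing \cite{Thom} as a black box, and that is what I would do.
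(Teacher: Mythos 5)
Your proposal matches the paper exactly: the lemma is Thompson's classification of minimal simple groups, and the paper, like you, simply attributes it to \cite{Thom} without reproving it. Your sketch of the $N$-group machinery and the extraction of the minimal simple list is a reasonable gloss, but the operative content — citing Thompson as a black box — is precisely what the paper does.
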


In the next lemma,  we  detail the structure of the   normalizers  of the Sylow $2$-subgroups in the Suzuki simple groups of Lie type, ${\rm Sz}(q)$.

\begin{lemma}\label{lemma2}  Let $G={\rm Sz}(q)$, where $q=2^{2n+1}$ and $n\geq 1$.  If $P$ is a Sylow $2$-subgroup of $G$, then ${\bf N}_G(P)=P\rtimes C_{q-1}$ is a Frobenius group with kernel $P$ and complement $C_{q-1}$. In particular, ${\bf N}_G(P)$ is not supersolvable.
\end{lemma}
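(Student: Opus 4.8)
The plan is to pin down ${\bf N}_G(P)$ inside the known list of maximal subgroups of ${\rm Sz}(q)$. Recall from the subgroup structure of the Suzuki simple groups (see, e.g., \cite{HB3, Wil}) that $G$ has a maximal subgroup $B$ of order $q^2(q-1)$ which is a Frobenius group whose kernel is a Sylow $2$-subgroup of $G$ and whose complement is cyclic of order $q-1$; concretely, $B$ is the stabilizer of a point in the natural $2$-transitive action of $G$ on $q^2+1$ points. Since all Sylow $2$-subgroups of $G$ are conjugate, we may take $P$ to be this Sylow $2$-subgroup, i.e. the kernel of $B$, so that $P\trianglelefteq B$ and hence $B\leq {\bf N}_G(P)$.

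The second step is to show that in fact ${\bf N}_G(P)=B$. Since $B$ is maximal in $G$ and $B\leq {\bf N}_G(P)\leq G$, we must have ${\bf N}_G(P)=B$ or ${\bf N}_G(P)=G$; the latter is impossible because $G$ is non-abelian simple and $P\neq 1$. Thus ${\bf N}_G(P)=B=P\rtimes C_{q-1}$ is the asserted Frobenius group. (Alternatively, one could count: ${\rm Sz}(q)$ has exactly $q^2+1$ Sylow $2$-subgroups, so $|{\bf N}_G(P)|=|G|/(q^2+1)=q^2(q-1)$, whence ${\bf N}_G(P)=P\rtimes H$ with $|H|=q-1$ by Schur--Zassenhaus, and the Frobenius structure together with the cyclicity of $H$ again follow from the cited structural references.)

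Finally, I would argue that $B$ is not supersolvable. Suppose, for contradiction, that it is. As $B$ is solvable, it has a chief series passing through its normal subgroup $P$, and by supersolvability every factor of such a series has prime order; in particular, its first nontrivial term $N$ satisfies $N\leq P$, so that $|N|=2$. Now $N\trianglelefteq B$ forces $C_{q-1}$ to normalize $N$, and since ${\rm Aut}(N)=1$ the group $C_{q-1}$ centralizes $N$. But $B$ is a Frobenius group with kernel $P$ and complement $C_{q-1}$, so no non-identity element of $C_{q-1}$ centralizes a non-identity element of $P$; as $q-1\geq 7>1$, this is a contradiction. Hence ${\bf N}_G(P)$ is not supersolvable.

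There is no serious obstacle in this argument: the subgroup structure of ${\rm Sz}(q)$ and the Frobenius structure of its point stabilizer are classical, and the only point deserving a moment's care is the equality ${\bf N}_G(P)=B$ — that is, ruling out that the normalizer is strictly larger than $B$ — which is settled at once by the maximality of $B$ and the simplicity of $G$.
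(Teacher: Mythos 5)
Your argument is correct. The paper itself disposes of this lemma with a single citation: the Frobenius structure ${\bf N}_G(P)=P\rtimes C_{q-1}$ is exactly \cite[Chap.\ XI, Lemma 3.1]{HB3}, and the non-supersolvability is stated as immediate. Your route rests on the same classical facts about ${\rm Sz}(q)$ but is more self-contained: you identify ${\bf N}_G(P)$ with the point stabilizer $B$ of the $2$-transitive action (equivalently via the count of $q^2+1$ Sylow $2$-subgroups), pin down the equality ${\bf N}_G(P)=B$ by maximality of $B$ together with simplicity of $G$, and then make explicit the step the paper leaves implicit: if $B$ were supersolvable, a minimal normal subgroup of $B$ inside $P$ would have order $2$, hence be centralized by the complement $C_{q-1}$, contradicting the fixed-point-free action of a Frobenius complement on the kernel (and $q-1\geq 7$ guarantees the complement is nontrivial). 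All steps check out; the only caution is that what you prove from the point-stabilizer description is essentially the content of the reference the paper invokes, so in the end both proofs lean on the same structural input about ${\rm Sz}(q)$, with yours supplying the short deduction of non-supersolvability that the paper omits.
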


\begin{proof} This follows from \cite[Chap. XI. Lemma 3.1]{HB3}.
\end{proof}

\section{Proofs}

As said in the Introduction, our first objective is to determine all non-abelian simple groups that satisfy our conditions for $p=3$. The strategy consists in appealing to the minimal simple groups and prove first the following  variant of  Theorem 1 of \cite{Barry}. We remark that our proof differs from that of \cite{Barry}.

\begin{theorem}\label{minimal}
 If $G$ is a finite non-abelian simple group that is non-isomorphic to $Sz(q)$, then $G$ contains a subgroup which is a minimal simple group distinct from $Sz(2^p)$ with $p$ an odd prime.
\end{theorem}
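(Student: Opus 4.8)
The plan is to first reformulate the statement in purely arithmetic terms. Inspecting Thompson's list (Lemma~\ref{Thom}), a minimal simple group has order divisible by $3$ if and only if it is \emph{not} of the form ${\rm Sz}(2^p)$ with $p$ an odd prime; and since the Suzuki groups ${\rm Sz}(q)$ are precisely the non-abelian simple groups of order coprime to $3$, the hypothesis $G\not\cong{\rm Sz}(q)$ amounts to $3\mid|G|$. Thus the theorem is equivalent to the assertion that \emph{every finite non-abelian simple group $G$ with $3\mid|G|$ contains a minimal simple subgroup of order divisible by $3$}.

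To produce a minimal simple section, let $H\leq G$ be minimal subject to being non-solvable, so that every proper subgroup of $H$ is solvable. One checks in the standard way that $H$ is perfect, that $\Phi(H)$ is the unique maximal normal subgroup of $H$ (if a maximal subgroup $K$ failed to contain a maximal normal subgroup $M$ of $H$, then $H=KM$ would force $K$ to surject onto the simple group $H/M$, contradicting the minimality of $H$), and hence that $S:=H/\Phi(H)$ is a minimal simple group; moreover $\Phi(H)=1$ exactly when $H$ is simple. In the favourable case $\Phi(H)=1$ we have $H\cong S$, and if in addition $3\mid|H|$ then $H$ is the subgroup we want.

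The difficulty is that this favourable case need not occur: a minimal non-solvable subgroup may have $\Phi(H)\neq 1$ — for instance the quasisimple cover ${\rm SL}_2(5)=2.A_5$, which contains no minimal simple subgroup whatsoever — and it may happen that $3\nmid|H|$, so that $S\cong{\rm Sz}(2^p)$ and $H$ itself is useless. I would resolve this by a contradiction argument based on the Classification of Finite Simple Groups: assuming $G$ has no minimal simple subgroup of order divisible by $3$, one runs through the possibilities for $G$. If $G=A_n$ with $n\geq 5$ then $A_5={\rm PSL}_2(4)\leq G$. If $G$ is of Lie type over $\mathbb{F}_q$ with $q\geq 4$, a subgroup generated by a pair of opposite long-root subgroups (or an appropriate subfield subgroup) yields a section isomorphic to ${\rm PSL}_2(q)$, and an induction on the Lie rank and on $q$ reduces the problem to the groups ${\rm PSL}_2(q')$, each of which for $q'\geq 4$ contains a minimal simple group of order divisible by $3$ by a further elementary reduction on $q'$. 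The finitely many groups of Lie type over $\mathbb{F}_2$ and $\mathbb{F}_3$ and the sporadic groups are dispatched by direct inspection, each of them containing one of $A_5$, ${\rm PSL}_2(7)={\rm PSL}_3(2)$, ${\rm PSL}_3(3)$ or ${\rm PSL}_2(8)$. For all of this one relies on the explicit descriptions of maximal subgroups and subfield subgroups in \cite{BHR, Con, HB3, Lieb2, Wil}; the only simple groups untouched by this sweep are the Suzuki groups themselves, whose non-solvable subgroups are exactly the subfield subgroups ${\rm Sz}(2^k)$ (see \cite{HB3}). This contradiction completes the proof.

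I expect the main obstacle to be precisely the phenomenon that a minimal non-solvable subgroup need not be simple: the quasisimple covers (such as ${\rm SL}_2(5)$, and the covers of ${\rm Sz}(8)$) genuinely arise, so one cannot simply read off a minimal simple subgroup of $G$ from a minimal non-solvable one, and this is exactly why the statement — which at first looks transparent — forces one into the classification. The heaviest part of the work is then the case-by-case bookkeeping for the groups of Lie type: verifying that the copy of ${\rm PSL}_2(q')$ one extracts actually has order divisible by $3$, and correctly handling the small fields $\mathbb{F}_2$ and $\mathbb{F}_3$.
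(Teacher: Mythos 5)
Your reformulation (reduce to: every non-abelian simple $G$ with $3\mid |G|$ contains a minimal simple subgroup of order divisible by $3$), your treatment of alternating and sporadic groups, and your diagnosis of why a minimal non-solvable subgroup does not settle the matter (quasisimple covers such as ${\rm SL}_2(5)$) are all correct, and your overall plan — a CFSG case sweep supported by the explicit subgroup data in \cite{BHR, Con, HB3, Lieb2, Wil} — is the same strategy the paper follows. The gap is in the step that carries the Lie-type case. You propose to use the subgroup generated by a pair of opposite long-root subgroups, which ``yields a section isomorphic to ${\rm PSL}_2(q)$,'' and then to induct on the rank and the field. But a section is not a subgroup, and in odd characteristic this long-root subgroup is typically ${\rm SL}_2(q)$ with its centre intact (e.g.\ the long-root $A_1$ in ${\rm PSp}_{2n}(q)$ is a genuine ${\rm SL}_2(q)$, since $-I_{2n}$ does not lie in it). A group ${\rm SL}_2(q)$ with $q$ odd has a unique involution, which is central, so it contains \emph{no} non-abelian simple subgroup whatsoever — this is exactly the ${\rm SL}_2(5)$ phenomenon you yourself flagged, and it reappears here. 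Inducting on the rank via Levi or parabolic structure has the same defect (one again obtains central extensions, i.e.\ sections), and the parenthetical ``appropriate subfield subgroup'' does not help either, since subfield subgroups are of the same type and push the problem down to the bottom of the induction unresolved. So the claim ``the induction reduces the problem to the groups ${\rm PSL}_2(q')$'' is not justified as stated: knowing that $G$ has a section ${\rm PSL}_2(q')$ does not give a minimal simple subgroup of $G$.

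What is needed (and what the paper supplies, case by case) is the existence of genuine simple subgroups in which the centre has been killed: for odd characteristic, embeddings such as ${\rm PSL}_2(r)\cong\Omega_3(r)$ inside classical groups, the ${\rm Sym}^3$ embedding ${\rm PSL}_2(r)<{\rm PSp}_4(r)$ of \cite[Table 8.12]{BHR}, ${\rm PSL}_2(5)<{\rm P\Omega}_7(r)$, ${\rm P\Omega}_8^{\pm}$, etc., together with Dickson's theorem to descend from ${\rm PSL}_2(r)$ to a minimal simple group, plus explicit containments (${\rm PSL}_2(7)<{\rm PSp}_6(2)$, ${\rm PSL}_3(3)<F_4(q)$, $G_2(2)$, $^2F_4(2)$, \ldots) for the exceptional and small-field cases. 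In characteristic $2$ your device does work, since there ${\rm SL}_2(2^k)={\rm PSL}_2(2^k)$ is simple for $k\geq 2$ and has order divisible by $3$. So the proposal needs its odd-characteristic reduction replaced by these concrete subgroup citations; as written, the key reduction would fail.
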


\begin{proof} According to the Classification of Finite Simple Groups we distinguish three cases.

\medskip
(1) $G$ is a sporadic simple group.  All sporadic simple groups except $O'N,J_1,Ru$ and $J_3$ contain either $M_{12}$ or $M_{22}$ \cite{Con}, and  both $M_{12}$ and $M_{22}$ contain ${\rm PSL}_2(5)\cong A_5$. On the other hand, again by \cite{Con}, we know that $J_1$ and $O'N$ both contain  ${\rm PSL}_2(11)$; likewise,  $J_3$ contains ${\rm PSL}_2(19)$, and $Ru$ contains ${\rm PSL}_2(5)$.  All these subgroups indicated  contain ${\rm PSL}_2(5)$ by a theorem of Dickson \cite[II.8.27]{Hup}, so we are done.

\medskip
(2) $G$ is an Alternating group $A_n$ with $n\geq5$. It is clear that $G\geq A_5$.

\medskip

(3) $G$ is a simple group of Lie type over the field of $q$ elements, where $q=r^s$ with $r$ prime and $s\geq 1$.  Suppose first that $G$ is a classical simple group.  Let $l$ be Lie rank of $G$ and write $G:=G_l(q)$.  Note that $G\geq G_l(r^t)$, whenever $t$ is a prime divisor of $s$ or $t=1$, and also $G\geq G_{l-1}(q)$. Thus, we may assume that both $n\leq l$ and $t$ are minimal with respect to $M:=G_n(r^t)$ being a simple group.  Of course, $M\leq G$. Hereafter, we  will show on a case-by-case basis that $M$ always contains a minimal simple group that is distinct from ${\rm Sz}(q)$, or equivalently, whose order is divisible by $3$.

\medskip
\begin{itemize}
\item[(3.1)] $G\cong {\rm PSL}_n(q)$. Then $M\cong {\rm PSL}_2(r^t)$.  It is clear that $r$ must be a prime greater than $3$ when $t=1$. If, in addition, $5$ does not divide $r^2-1$ then, by Lemma \ref{Thom}, we know that ${\rm PSL}_2(r)$ is minimal simple, so we are done. If,  on the contrary, $5$ divides $r^2-1$, then again by \cite[II.8.27]{Hup}, ${\rm PSL}_2(r)$ contains ${\rm PSL}_2(5)\cong A_5$, so we are finished too. On the other hand, we must have $r=2$ whenever $t$ is an odd prime, and this case is finished again in view of Lemma \ref{Thom}.

\item[(3.2)] $G\cong {\rm P\Omega}_{2n+1}(q)$, where $n\geq3$ and $q$ is odd.  Then $M\cong {\rm P\Omega}_{7}(r)$. By \cite[Table 8.40]{BHR}, we have that $M$ contains a minimal simple subgroup ${\rm PSL}_2(5)$.

\item[(3.3)] $G\cong {\rm PSp}_{2n}(q)$, where $n\geq2$.  Then $M\cong {\rm PSp}_{4}(r^t)$. If $r$ is odd, it follows that $G$ contains a minimal simple subgroup ${\rm PSL}_2(r)$ by \cite[Table 8.12]{BHR}. But then, arguing as in (3.1), we can finish this case. If $r=2$, then $G$ contains a minimal simple subgroup ${\rm PSL}_2(2^t)$, where $t$ is an odd prime, by \cite[Table 8.14]{BHR}.

\item[(3.4)] $G\cong {\rm P\Omega}_{2n}^+(q)$, where $n\geq4$.  Then $M\cong  {\rm P\Omega}_{8}^+(r)$.  In this case, by \cite[Table 8.50]{BHR},  $G$ has a minimal simple subgroup ${\rm PSL}_2(5)$.

\item[(3.5)] $G\cong {\rm PSU}_{n+1}(q)$, where $n\geq2$.  Then $M\cong  {\rm PSU}_{3}(r^t)$. If $r$ is odd, then $t=1$.  By \cite[Table 8.5]{BHR}, we have that $G$ has a minimal simple subgroup ${\rm PSL}_2(r)$. The same argument used in (3.1) serves to get the conclusion in this case.

\item[(3.6)] $G\cong {\rm P\Omega}_{2n}^-(q)$, where $n\geq4$.  Then $M\cong   {\rm P\Omega}_{8}^-(r^t)$.  By \cite[Table 8.5]{BHR}, we know that ${\rm P\Omega}_{8}^-(r^t)$ has a subgroup ${\rm P\Omega}_{4}^-(r)\cong {\rm PSL}_2(r^2)$. Since ${\rm PSL}_2(r)\leq {\rm PSL}_2(r^2)$, we deduce that $G$ has a minimal simple subgroup ${\rm PSL}_2(r)$ if $r$ is odd, and then the same argument used in (3.1) finishes this case.  If $r=2$, then $t=1$ and $M\cong {\rm P\Omega}_{8}^-(2)$. By \cite[Table 8.52]{BHR}, we know that ${\rm PSp}_6(2)< M$. Moreover, according to \cite{Con}, we have ${\rm PSL}_2(7)\leq {\rm PSp}_6(2)$. Hence it follows that $G$ possesses a minimal simple subgroup ${\rm PSL}_2(7)$.

\end{itemize}

Next we consider the case  when $G$ is  an exceptional group. Similarly, we show that $M$ contains a minimal simple group of order divisible by $3$, that is,  non-isomorphic to ${\rm Sz}(q)$.
\begin{itemize}

\item[(3.7)] $G\cong G_2(q)$.   Then $M\cong  G_2(r^t)$.  If $r\neq 3$, then $M$ has a subgroup $G_2(2)$ by \cite[Table 4.1]{Wil}, and moreover  $G_2(2)$ has a minimal simple subgroup ${\rm PSL}_2(7)$. If $r=3$, then $M\cong G_2(3)$, and $G_2(3)$ contains a minimal simple subgroup ${\rm PSL}_2(13)$.

\item[(3.8)] $G\cong $ $^2G_2(q)$.   Then $r=3$ and $M\cong  $ $^2G_2(3)\cong {\rm PSL}_2(8)$ which is a minimal simple group.

\item[(3.9)] $G\cong $ $^3D_4(q)$.  Since $^3D_4(q)> G_2(q)$ by \cite[Theorem 4.3]{Wil}, as in (3.7), we also get the conclusion.

\item[(3.10)] $G\cong  F_4(q)$.  Then $M\cong  F_4(r)$.  By \cite[Theorem 4.4]{Wil}, we obtain that $M$ contains a minimal simple subgroup ${\rm PSL}_3(3)$.

\item[(3.11)] $G\cong$  $^2F_4(q)$.  Then $r=2$ and  $M\cong$  $^2F_4(2^3)$. But $^2F_4(2^3)>$ $^2F_4(2)$, and $^2F_4(2)$ contains a minimal simple subgroup ${\rm PSL}_3(3)$.

\item[(3.12)] $G\cong  E_6(q)$ or $G\cong$ $^2E_6(q)$.  Let $\alpha$ be a   graph automorphism of $G$, then ${\bf C}_G(\alpha)=F_4(q)$ by \cite[Section 7]{Gor}. Moreover, we know that $E_6(q)< E_7(q)< E_8(q)$. Thus, in all these cases, taking into account (3.10), we get the result.

\end{itemize}
\end{proof}

\begin{theorem} \label{minimal-simple}
 Let $G$ be a minimal simple group satisfying that every maximal subgroup of order divisible by $3$ is supersolvable. Then $G\cong {\rm PSL}_2(2^p)$  or $G\cong {\rm Sz}(2^p)$ with $p$ an odd prime.
\end{theorem}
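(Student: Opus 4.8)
The plan is to work through Thompson's list of minimal simple groups (Lemma~\ref{Thom}), types (1)--(5), and in each case either exhibit a maximal subgroup of order divisible by $3$ that is not supersolvable, or check that no such subgroup exists. The two families that survive will be exactly ${\rm Sz}(2^p)$ and ${\rm PSL}_2(2^p)$ with $p$ an odd prime.

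The two surviving types are dealt with at once. If $G\cong{\rm Sz}(2^p)$ with $p$ odd, then $3\nmid|G|$, so the hypothesis is vacuous and $G$ qualifies. If $G\cong{\rm PSL}_2(2^p)$ with $p$ an odd prime, then $2^p\equiv -1\pmod 3$, so $3\mid 2^p+1$ while $3\nmid 2^p-1$; by Dickson's classification of the subgroups of ${\rm PSL}_2(q)$ (see \cite[II.8.27]{Hup}) the maximal subgroups of $G$ are the Borel subgroup $[2^p]\rtimes C_{2^p-1}$, of order $2^p(2^p-1)$, together with the dihedral groups $D_{2(2^p-1)}$ and $D_{2(2^p+1)}$. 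Only $D_{2(2^p+1)}$ has order divisible by $3$, and it is supersolvable, being dihedral; hence $G$ qualifies. (For $p=2$ one has ${\rm PSL}_2(4)\cong A_5$, whose maximal subgroup $A_4$ has order divisible by $3$ and is not supersolvable, which is why $p$ must be odd.)

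It remains to eliminate types (1), (3) and (5). For $G\cong{\rm PSL}_3(3)$, a point stabilizer in the action on the projective plane is a maximal subgroup of shape $3^2\rtimes{\rm GL}_2(3)$ of order $432$; it has order divisible by $3$ but is not supersolvable (for instance because ${\rm GL}_2(3)$ acts irreducibly on the elementary abelian normal subgroup of order $9$). For $G\cong{\rm PSL}_2(p)$ with $p>3$ and $5\nmid p^2-1$, Dickson's theorem shows that ${\rm PSL}_2(p)$ contains $S_4$ as a maximal subgroup when $p\equiv\pm 1\pmod 8$ and contains $A_4$ as a maximal subgroup otherwise; here $5\nmid p^2-1$ is exactly what rules out an overgroup $A_5$ of such an $A_4$. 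Either way we obtain a non-supersolvable maximal subgroup of order divisible by $3$. Finally, for $G\cong{\rm PSL}_2(3^p)$ with $p$ an odd prime, the Borel subgroup $B$ is maximal and of order divisible by $3$, and its unipotent radical $U$ is elementary abelian of order $3^p$, with complementing torus cyclic of order $(3^p-1)/2>2$ acting on $U\cong\mathbb{F}_{3^p}^{+}$ by field multiplication; since $p$ is prime the $\mathbb{F}_3$-subalgebra of $\mathbb{F}_{3^p}$ generated by the torus is a subfield properly containing $\mathbb{F}_3$, hence all of $\mathbb{F}_{3^p}$, so every torus-invariant $\mathbb{F}_3$-subspace is an ideal and the action is irreducible. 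Thus $U$ is a minimal normal subgroup of $B$ of order $3^p$, which is not prime as $p\ge 3$, so $B$ is not supersolvable. This kills the remaining three types and finishes the proof.

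The only delicate points I foresee are the case-(3) bookkeeping --- one must confirm that the dichotomy on $p\bmod 8$, together with the standing hypothesis $5\nmid p^2-1$, always leaves a genuinely maximal $S_4$ or $A_4$ for each admissible $p$ --- and the irreducibility of the torus action in case (5). Both rest on Dickson's description of the subgroup lattice of ${\rm PSL}_2$, so the obstacle is careful bookkeeping rather than anything conceptual.
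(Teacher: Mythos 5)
Your proposal is correct and follows essentially the same route as the paper: a case-by-case pass through Thompson's classification (Lemma~\ref{Thom}), verifying that ${\rm Sz}(2^p)$ satisfies the hypothesis vacuously and that the maximal subgroups of ${\rm PSL}_2(2^p)$, $p$ odd, of order divisible by $3$ are only the supersolvable dihedral groups $D_{2(2^p+1)}$, while eliminating ${\rm PSL}_3(3)$, ${\rm PSL}_2(p)$ and ${\rm PSL}_2(3^p)$ by exhibiting non-supersolvable subgroups of order divisible by $3$. The only differences are your choices of witnesses (the parabolic $3^2\rtimes{\rm GL}_2(3)$ instead of the maximal $S_4$ in ${\rm PSL}_3(3)$, and the Borel subgroup with its irreducible torus action instead of the subfield subgroup ${\rm PSL}_2(3)\cong A_4$ in ${\rm PSL}_2(3^p)$), which are sound but cost you extra maximality/irreducibility bookkeeping that the paper avoids by noting that supersolvability is inherited by subgroups of any maximal overgroup.
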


\begin{proof}
As $G$ is a minimal simple group,  we have to discuss each of the groups listed in Lemma \ref{Thom}. Certainly, the order of ${\rm Sz}(2^p)$ with $p$ an odd prime is not divisible by $3$, so  ${\rm Sz}(2^p)$ trivially satisfies the condition of the theorem.  We can rule out the case ${\rm PSL}_3(3)$ because it possesses a maximal subgroup  isomorphic to $S_4$ (see \cite{Con}), which is not supersolvable. If $G\cong {\rm PSL}_2(p)$, where $p$ is a prime, then $G$ has maximal subgroups $A_4$ or $S_4$ by \cite[Table 8.1]{BHR}, providing again a contradiction. If $G\cong {\rm PSL}_2(3^p)$ with $p$ an odd prime, then $G$ also contains a subgroup ${\rm PSL}_2(3)\cong A_4$, a contradiction as well.

\medskip

Therefore, according to  Lemma \ref{Thom}, it only remains to show that  ${\rm PSL}_2(2^p)$
 with $p$ an odd prime does  satisfy the hypotheses of the theorem.  Note  that ${\rm PSL}_2(4)\cong {\rm PSL}_2(5)$ has been discarded above, so $p$ can be assumed to be odd indeed. Now,  by \cite[Table 8.1]{BHR} for instance, we know that the maximal subgroups of ${\rm PSL}_2(2^p)$ are either isomorphic to dihedral groups of order $2(2^p-1)$ and $2(2^p+1)$, or  isomorphic to $C_2^p\rtimes C_{2^p-1}$. However,  the fact that $p$ is odd implies that  only the dihedral groups of order $2(2^p+1)$ have  an order divisible by $3$. As these groups are supersolvable, we are finished.
\end{proof}

We are now ready to achieve  our first objective, which is an equivalent form of Theorem A.

\begin{theorem} \label{simple}

 Let $G$ be a  non-abelian  simple group of order divisible  by $3$ such that every maximal subgroup of order divisible by $3$ is supersolvable. Then $G$ is isomorphic to ${\rm PSL}_2(2^p)$ with $p$ an odd prime.
\end{theorem}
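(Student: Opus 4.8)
The plan is to combine Theorem~\ref{minimal} with Theorem~\ref{minimal-simple} to get a clean dichotomy on $G$ itself. First I would observe that the hypothesis on $G$ is inherited by subgroups only in a weak sense, so the useful direction is to pass to a \emph{minimal simple subgroup}: by Theorem~\ref{minimal}, since $G$ is non-abelian simple, either $G \cong {\rm Sz}(q)$ for some $q = 2^{2n+1}$, or $G$ contains a minimal simple subgroup $S$ of order divisible by $3$. The case $G \cong {\rm Sz}(q)$ is immediately excluded because $|{\rm Sz}(q)|$ is not divisible by $3$, contradicting our standing hypothesis that $3 \mid |G|$. So I may assume $G$ contains such an $S$, and the goal becomes to show $G = {\rm PSL}_2(2^p)$.

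The second step is to analyze a maximal subgroup $M$ of $G$ containing $S$ (pick any maximal $M$ with $S \le M \le G$; this exists since $S < G$ as $S$ is a proper subgroup of the simple group $G$). Because $3 \mid |S|$ divides $|M|$, the hypothesis forces $M$ to be supersolvable. But then every subgroup of $M$ is supersolvable, so in particular $S$ is supersolvable --- which is absurd, since $S$ is non-abelian simple. Wait: this would rule out \emph{everything}, so the argument cannot be run this bluntly. The correct move is subtler: $S$ being a minimal simple subgroup of $G$ of order divisible by $3$, Theorem~\ref{minimal-simple} does \emph{not} apply to $S$ directly (its maximal subgroups need not be supersolvable inside $G$). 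Instead I would argue at the level of $G$: take a maximal subgroup $M$ of $G$ with $3 \mid |M|$; the hypothesis says $M$ is supersolvable, hence $M$ is solvable. This shows every maximal subgroup of $G$ of order divisible by $3$ is solvable. Now if $G$ itself were a minimal simple group, Theorem~\ref{minimal-simple} gives $G \cong {\rm PSL}_2(2^p)$ or ${\rm Sz}(2^p)$, and the $3 \mid |G|$ hypothesis rules out the Suzuki case, done. So the heart of the matter is reducing the general non-abelian simple $G$ to the minimal simple case.

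For that reduction, I would use Theorem~\ref{minimal}: if $G$ is not ${\rm Sz}(q)$, it contains a minimal simple subgroup $S$ with $3 \mid |S|$. Embed $S$ in a maximal subgroup $M$ of $G$; then $3 \mid |M|$, so $M$ is supersolvable by hypothesis, so $M$ is solvable, so its subgroup $S$ is solvable --- contradicting that $S$ is non-abelian simple. Hence $G$ must be ${\rm Sz}(q)$ after all, but that contradicts $3 \mid |G|$. This chain of contradictions shows that the \emph{only} way $G$ can be a non-abelian simple group of order divisible by $3$ with all its maximal subgroups of $3$-divisible order supersolvable is if no minimal simple proper subgroup obstructs it --- and the resolution is that $G$ must itself already be a minimal simple group, whence Theorem~\ref{minimal-simple} finishes the proof with $G \cong {\rm PSL}_2(2^p)$, $p$ odd prime. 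I expect the main subtlety to be phrasing this last step rigorously: one shows that \emph{any} minimal simple subgroup of $G$ would have to sit inside a supersolvable (hence solvable) maximal subgroup and thus be solvable, which is impossible; therefore $G$ has no proper minimal simple subgroup, i.e., $G$ is itself minimal simple, and then Theorem~\ref{minimal-simple} applies. The Suzuki groups are eliminated throughout purely by the divisibility-by-$3$ hypothesis.

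\begin{proof}
By hypothesis $3 \mid |G|$, so $G \not\cong {\rm Sz}(q)$ for any $q$, since the order of a Suzuki group is coprime to $3$. Suppose, for contradiction, that $G$ is not a minimal simple group. Then $G$ has a proper subgroup which is non-abelian simple, and hence $G$ contains a minimal simple subgroup; moreover, since $G \not\cong {\rm Sz}(q)$, Theorem~\ref{minimal} guarantees that $G$ contains a minimal simple subgroup $S$ with $S \not\cong {\rm Sz}(2^p)$, i.e.\ with $3 \mid |S|$. As $S$ is a proper subgroup of the simple group $G$, there is a maximal subgroup $M$ of $G$ with $S \le M$. Then $3 \mid |S| \mid |M|$, so by hypothesis $M$ is supersolvable, hence solvable, and therefore its subgroup $S$ is solvable. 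This contradicts the fact that $S$ is non-abelian simple. Consequently $G$ is a minimal simple group. Applying Theorem~\ref{minimal-simple}, we get $G \cong {\rm PSL}_2(2^p)$ or $G \cong {\rm Sz}(2^p)$ with $p$ an odd prime; the latter is excluded because $3 \mid |G|$. Hence $G \cong {\rm PSL}_2(2^p)$ with $p$ an odd prime.
\end{proof}
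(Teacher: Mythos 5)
Your proof is correct and follows essentially the same route as the paper: when $G$ is not minimal simple, Theorem~\ref{minimal} supplies a proper minimal simple subgroup of order divisible by $3$, which sits inside a supersolvable (hence solvable) maximal subgroup, a contradiction, and then Theorem~\ref{minimal-simple} together with $3\mid |G|$ eliminates the Suzuki case and yields $G\cong {\rm PSL}_2(2^p)$. (Your incidental claim that non-minimal-simplicity gives a proper non-abelian simple subgroup is not justified as stated, but it is never used, since you invoke Theorem~\ref{minimal} directly; in fact your explicit embedding of $S$ into a maximal subgroup is slightly more careful than the paper's wording.)
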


\begin{proof} We take into account that the Suzuki simple groups of Lie type are the only non-abelian simple groups of $3'$-order. If $G$ is not minimal simple, we can apply  Theorem \ref{minimal} and  deduce that $G$ has a proper (minimal) simple  subgroup of order divisible by $3$. This is a contradiction because such subgroup should be supersolvable by hypothesis, and obviously it is not. As a consequence, $G$ must be a minimal simple group, so we can apply  Theorem \ref{minimal-simple} and the result is proved.
\end{proof}

Once we have demonstrated Theorem A, we are able to prove Theorem B, which we state again.

\medskip

\begin{theorem} \label{main}
Let $G$ be a finite non-solvable group of order divisible by $3$ such that every maximal subgroup of $G$ is either supersolvable or a $3'$-group.  Then $\Phi(G)={\bf F}(G)={\bf O}_{3'}(G)$ and
                                    $G/{\bf O}_{3'}(G)  \cong  {\rm PSL}_2(2^p)$ with $p$ an odd prime. Furthermore ${\bf O}_2(G)\leq {\bf Z}(G)$.

\end{theorem}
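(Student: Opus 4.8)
The plan is to work with a minimal counterexample $G$ and to exploit Theorem~\ref{simple} to pin down a simple section of $G$ on $3$-divisible maximal subgroups. First I would consider a chief series and locate a non-solvable chief factor; since every proper quotient of $G$ still satisfies the hypothesis, a standard minimality argument reduces us to understanding the situation where $G$ has a unique minimal normal subgroup, or more precisely where we can produce a non-abelian simple section $S$ whose $3$-divisible maximal subgroups are supersolvable, forcing $S\cong{\rm PSL}_2(2^p)$ by Theorem~\ref{simple}. The key mechanism throughout is: if $H$ is a maximal subgroup of $G$ with $3\mid |H|$, then $H$ is supersolvable, hence $3$-nilpotent with a normal Sylow $3$-subgroup lying above, so in particular $H$ cannot involve a non-abelian simple group of order divisible by $3$; combined with the Frattini argument and Thompson/Dickson-type embedding facts this severely constrains where the non-solvable part of $G$ can sit.

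Next I would establish the three equalities $\Phi(G)={\bf F}(G)={\bf O}_{3'}(G)$. The inclusion ${\bf O}_{3'}(G)\leq{\bf F}(G)$ is automatic once we show ${\bf O}_{3'}(G)$ is nilpotent, and $\Phi(G)\leq{\bf F}(G)$ is classical; the substance is showing ${\bf F}(G)$ is a $3'$-group and that it is contained in $\Phi(G)$. For the first: if a Sylow $3$-subgroup met ${\bf F}(G)$ nontrivially, then ${\bf O}_3(G)\neq 1$, and I would push this down through $G/{\bf O}_3(G)$ (still satisfying the hypothesis), deriving that $G$ would be $3$-solvable or that a $3$-divisible maximal subgroup containing ${\bf O}_3(G)$ would have to be supersolvable in a way incompatible with the non-solvable ${\rm PSL}_2(2^p)$-quotient — here is where one uses that ${\rm PSL}_2(2^p)$ has trivial ${\bf O}_3$ and that its $3$-divisible maximal subgroups are exactly dihedral of order $2(2^p+1)$, a small self-normalizing structure. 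For $\Phi(G)={\bf O}_{3'}(G)$: any maximal subgroup of $G$ either is a $3'$-group (hence contains a full Sylow $3'$-part and, by a counting/transfer argument, contains ${\bf O}_{3'}(G)$ up to conjugacy considerations) or is supersolvable with order divisible by $3$; in the latter case ${\bf O}_{3'}(G)\leq H$ follows because $H{\bf O}_{3'}(G)$ is again proper of order divisible by $3$, hence supersolvable and, by maximality, equals $H$. Intersecting over all maximal subgroups gives ${\bf O}_{3'}(G)\leq\Phi(G)$, and the reverse inclusion uses that $\Phi(G)$ is nilpotent and, by the $3$-structure just obtained, has trivial Sylow $3$-subgroup.

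Then I would identify the quotient: set $\bar G=G/{\bf O}_{3'}(G)$. Since ${\bf O}_{3'}(\bar G)=1$ and $\bar G$ is non-solvable satisfying the hypothesis, the generalized Fitting subgroup ${\bf F}^*(\bar G)={\bf E}(\bar G)$ is a direct product of non-abelian simple groups on which $\bar G$ acts; I would argue there is exactly one simple factor and that it has order divisible by $3$ (a $3'$ simple factor would be a Suzuki group, but then a $3$-divisible maximal subgroup of $\bar G$ containing it would have to be supersolvable, impossible), so by Theorem~\ref{simple} that factor is ${\rm PSL}_2(2^p)$. Using $\mathrm{Out}({\rm PSL}_2(2^p))=C_p$ of odd order and that field automorphisms of order $p$ would create $3$-divisible non-supersolvable maximal subgroups (or a $3'$-group maximal subgroup that is too big), I would force $\bar G={\rm PSL}_2(2^p)$ exactly, with ${\bf C}_{\bar G}({\bf F}^*(\bar G))=1$. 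Finally, ${\bf O}_2(G)\leq{\bf Z}(G)$: ${\bf O}_2(G)\leq{\bf O}_{3'}(G)={\bf F}(G)$ which is nilpotent, so ${\bf O}_2(G)$ is a normal $2$-subgroup; the action of $G$ on it factors through $\bar G={\rm PSL}_2(2^p)$, and since ${\rm PSL}_2(2^p)$ is simple and perfect, either it acts trivially (giving ${\bf O}_2(G)\leq{\bf Z}(G)$) or nontrivially, but a nontrivial action would let us build a maximal subgroup of order divisible by $3$ (a dihedral-of-order-$2(2^p+1)$ preimage extended by part of ${\bf O}_2(G)$) that is non-supersolvable — ruling this out forces centrality.

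I expect the main obstacle to be the step where one rules out ${\bf O}_3(G)\neq 1$ and more generally controls how the supersolvable $3$-divisible maximal subgroups interact with the ${\rm PSL}_2(2^p)$-quotient: one must carefully track, for each maximal subgroup $H$ of $G$, whether $H\supseteq{\bf O}_{3'}(G)$ and what the image of $H$ in $\bar G$ looks like, using that maximal subgroups of $G$ either contain ${\bf O}_{3'}(G)$ (and correspond to maximal subgroups of $\bar G$) or are $3'$-groups; disentangling the two possibilities and showing consistency with supersolvability is the delicate bookkeeping. The rest — the Frattini-type equalities and the ${\bf O}_2$-centrality — should follow by fairly standard arguments once the simple section is identified.
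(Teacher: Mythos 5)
Your outline reproduces the broad shape of the argument (reduce to a simple quotient, apply Theorem~\ref{simple}, then a Frattini-cover argument), but it leaves two genuine gaps at exactly the points where the paper has to do real work. First, you never rule out non-solvable $3'$-sections \emph{inside} ${\bf O}_{3'}(G)$, i.e.\ Suzuki composition factors: a priori ${\bf O}_{3'}(G)$ could be non-solvable, and then several of your steps collapse. For instance, your claim that $H{\bf O}_{3'}(G)$ is proper for a supersolvable maximal $H$ only yields that $G/{\bf O}_{3'}(G)$ is supersolvable if $H{\bf O}_{3'}(G)=G$; that contradicts non-solvability of $G$ only if ${\bf O}_{3'}(G)$ is already known to be solvable. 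Likewise your identification of $\bar G=G/{\bf O}_{3'}(G)$ presupposes $\bar G$ is non-solvable, which again is not automatic. The paper disposes of this via a Frattini argument on a putative minimal normal subgroup of ${\bf O}_{3'}(G)$ modulo the solvable radical (necessarily a product of ${\rm Sz}(q)$'s) together with Lemma~\ref{lemma2}, that ${\bf N}_{{\rm Sz}(q)}(P)=P\rtimes C_{q-1}$ is a non-supersolvable Frobenius group; nothing in your sketch plays this role (your only Suzuki remark concerns components of $\bar G$, a different and easier issue). Second, the step you yourself flag as the main obstacle — excluding a nontrivial normal $3$-subgroup, equivalently ${\bf O}_3(\bar G)=1$, which you need for ${\bf F}^*(\bar G)={\bf E}(\bar G)$ — is not actually argued: your sketch invokes the ${\rm PSL}_2(2^p)$-quotient before it has been established, which is circular. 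The missing tool is Doerk's theorem that minimal non-supersolvable groups are solvable \cite{Doerk}: if $3$ divided the order of the solvable radical (in particular if ${\bf O}_3(G)\neq 1$), every maximal subgroup of $G/S(G)$ would be supersolvable, forcing $G$ solvable. Without this (or an equivalent), your route to ${\bf F}(G)$ being a $3'$-group and to ${\bf F}^*(\bar G)={\bf E}(\bar G)$ does not go through.

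The final step is also flawed as sketched: the action of $G$ on ${\bf O}_2(G)$ does \emph{not} factor through $G/{\bf O}_{3'}(G)$ unless ${\bf O}_2(G)$ is abelian (since ${\bf O}_2(G)\leq{\bf O}_{3'}(G)$ does not centralize itself), and "a nontrivial action would let us build a non-supersolvable maximal subgroup of order divisible by $3$" cannot work, because by hypothesis all such maximal subgroups \emph{are} supersolvable — the point is rather to extract centralization from that supersolvability. The paper's argument is short and worth comparing: ${\bf O}_2(G)P$ (with $P\in{\rm Syl}_3(G)$) lies in a supersolvable maximal subgroup, so its Sylow $3$-subgroup is normal (largest prime), giving $[{\bf O}_2(G),P]=1$ for every $P$; then one needs ${\bf O}^{3'}(G)=G$, which follows from the observation that $\Phi(G)={\bf O}_{3'}(G)$ is the unique maximal normal subgroup of $G$ and is a $3'$-group — another ingredient absent from your sketch.
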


\begin{proof} Let us denote $\overline{G}=G/S(G)\neq 1$, where $S(G)$ is the solvable radical of $G$. If $3$ divides $|S(G)|$, then it is clear that  every maximal subgroup $\overline{H}$ of $\overline{G}$ satisfies that $|H|$ is divisible by $3$, so by hypothesis $H$ is supersolvable, and thus, $\overline {H}$ too. This implies that $\overline{G}$ is either supersolvable or minimal non-supersolvable. Both possibilities lead to the solvability of $G$ by a well-known theorem of Doerk \cite{Doerk}, so we get a contradiction. Henceforth, we will assume that $3$ does not divide $|S(G)|$ for the rest of the proof. In particular, $S(G)\leq {\bf O}_{3'}(G)$.

\medskip
Next we claim that $S(G)= {\bf O}_{3'}(G)$. Suppose on the contrary that  $S(G)< {\bf O}_{3'}(G)$ and take $\overline{L}$ a minimal normal subgroup of $\overline{G}$, with $L\leq {\bf O}_{3'}(G)$. Since $\overline{L}$ is not solvable and the Suzuki simple group is the only simple group whose order is not divisible by $3$, it is clear that we can write  $\overline{L}=\overline{S}\times\ldots \times \overline{S}$, where $\overline{S}\cong {\rm Sz}(q)$ for some $q=2^a$ and $a\geq 2$. Now, take $\overline{P}$ a Sylow $2$-subgroup of $\overline{S}$, so $\overline{P_0}=\overline{P}\times \ldots \times \overline{P}$ is a Sylow $2$-subgroup of $\overline{L}$. By the Frattini argument, we have $\overline{G}=\overline{L}{\bf N}_{\overline{G}}(\overline{P_0})$. Notice that
$|{\bf N}_{\overline{G}}(\overline{P_0})|$ is divisible by $3$ and this subgroup is necessarily  proper in $\overline{G}$. Therefore, there exists some maximal subgroup $K$ of $G$ such that
${\bf N}_{\overline{G}}(\overline{P_0})\leq \overline{K}$. Then, by hypothesis, $K$ must be supersolvable, and as a consequence, ${\bf N}_{\overline{G}}(\overline{P_0})$ is supersolvable too.
In particular, we deduce that  ${\bf N}_{\overline{L}}(\overline{P_0})$ is supersolvable, and hence  ${\bf N}_{\overline{S}}(\overline{P})$ as well. This contradicts Lemma \ref{lemma2}, so the claim is proved.

\medskip
Next we prove that $\overline{G}$ is simple. Of course, we have that $3$ divides $|\overline{G}|$. Take again $\overline{L}$ a minimal normal subgroup of $\overline{G}$.
In this case, by the equality obtained in the above paragraph,  it is evident that $3$ divides $|\overline{L}|$, so we can write $\overline{L}= \overline{S_1}\times\ldots \times \overline{S_n}$, a  direct product of isomorphic non-abelian simple groups of order divisible by $3$.
If $\overline{L}<\overline{G}$, then the hypotheses imply that $L$ is supersolvable, which obviously is a contradiction. Thus $\overline{L}=\overline{G}$. Furthermore,  if $n>1$,  one can easily construct, for instance,  a maximal subgroup
of $\overline{G}$ (of order divisible by $3$) containing $\overline{S_i}$ for every $i=1,\ldots, n-1$. The non-solvability of such groups together with the hypotheses certainly lead   to a contradiction. Accordingly, $n=1$, that is, $\overline{G}$ is non-abelian simple, as wanted.
Now, it suffices to notice that the hypotheses of the theorem imply that  $\overline{G}$ satisfies the conditions of Theorem \ref{simple}, and  consequently, $G/{\bf O}_{3'}(G)  \cong  {\rm PSL}_2(2^p)$ with $p$ prime.

\medskip
We prove now that $G$ is a Frattini cover of ${\rm PSL}_2(2^p)$.  Suppose that there is a maximal subgroup $M$ of $G$  that does not contain ${\bf O}_{3'}(G)$.  We certainly have $M{\bf O}_{3'}(G)=G$ and note that $3$ must divide $|M|$. Then, by hypothesis $M$ is supersolvable,  which, together with the solvability of ${\bf O}_{3'}(G)$, yields to the solvability of $G$, a contradiction. As a result, we conclude that ${\bf O}_{3'}(G)\leq \Phi(G)\leq {\bf F}(G)$. The simplicity of $\overline{G}$ certainly implies the equality of these subgroups.

\medskip
Finally, we prove that ${\bf O}_2(G) \leq  {\bf Z}(G)$. We note
 first that $\Phi(G)$ is the only maximal normal subgroup of $G$. Indeed, let $N$ be any maximal normal subgroup of $G$. Since $N\Phi(G)<G$, then $\Phi(G)\leq N$, and the simplicity of $\overline{G}$ forces the equality $\Phi(G)=N$.   We can prove now that ${\bf O}_2(G) \leq  {\bf Z}(G)$. Let $P\neq 1$  be a Sylow $3$-subgroup of $G$. Then  ${\bf O}_2(G)P$ is supersolvable by hypothesis, so in particular $P \unlhd {\bf O}_2(G)P$. It follows  that ${\bf O}_2(G)$ centralizes every Sylow $3$-subgroup of $G$. Now, as ${\bf O}^{3'}(G)$ is generated by all Sylow $3$-subgroups of $G$, we have ${\bf O}_2(G)\leq {\bf C}_G({\bf O}^{3'}(G))$. But notice that   ${\bf O}^{3'}(G)=G$, otherwise ${\bf O}^{3'}(G)$ would be contained in $\Phi(G)$, a contradiction. We conclude that ${\bf O}_2(G) \leq {\bf Z}(G)$, so the proof is finished.
\end{proof}

\begin{remark}
Groups satisfying  the thesis of Theorem B do exist.  By \cite[Chap. B. Theorem 11.8]{Doerk-Hawkes}, given  a prime $q$ and a group $H$ whose order is divisible by $q$, there exists a group $G$ with a  normal, elementary abelian $q$-subgroup $N\neq 1$, such that $N\leq \Phi(G)$ and $G/N\cong H$. In particular, if we take $H={\rm PSL}_2(2^p)$ with $p$ prime, as $H$ is simple, we would have $N=\Phi(G)$. Therefore,  it is possible to ensure the existence of a group $G$ satisfying $G/\Phi(G)\cong {\rm PSL}_2(2^p)$, where $\Phi(G)$ is a elementary abelian $q$-subgroup for a prime $q$ dividing $|H|$. Furthermore, we  notice that if,  in addition, such group $G$  satisfies the hypotheses of Theorem B, then $q$ must be odd (and of course, distinct from $3$). Indeed, if $q=2$, then by Theorem B, we have $N={\bf Z}(G)$, so $G$ would be a perfect central extension of ${\rm PSL}_2(2^p)$. However, as the Schur multiplier of ${\rm PSL}_2(2^p)$ is trivial \cite{Con}, it certainly follows that $N=1$, a contradiction.
\end{remark}

We would like to remark that the condition on {\it every maximal subgroup}  given in Theorems A and B can  be replaced by just {\it every proper subgroup}. In fact, both conditions are equivalent. Thus, we also obtain the following.

\begin{corollary}
Let $G$ be a non-solvable group whose proper subgroups  are either  supersolvable or  $3'$-subgroups. Then either  $G$  is a $3'$-group or ${\bf F}(G)=\Phi(G)={\bf O}_{3'}(G)$ and ${\bf O}_2(G)\leq {\bf Z}(G)$ and $G/{\bf O}_{3'}(G)\cong {\rm PSL}_2(2^p)$, with $p$ prime.
\end{corollary}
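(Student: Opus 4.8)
The plan is to derive the corollary straight from Theorem~\ref{main} (Theorem~B), the only genuine observation being that the two sets of hypotheses coincide. So the first thing I would record is: for any finite group $G$, demanding that every \emph{proper} subgroup be supersolvable or a $3'$-subgroup is the same as demanding that every \emph{maximal} subgroup be supersolvable or a $3'$-subgroup. One direction is trivial, since maximal subgroups are proper. For the other, assuming the maximal-subgroup version, I would take an arbitrary proper subgroup $H$ with $3 \mid |H|$, embed it in a maximal subgroup $M \geq H$, note that then $3 \mid |M|$, conclude that $M$ is supersolvable by hypothesis, and hence that its subgroup $H$ is supersolvable as well; equivalently, a proper subgroup of order coprime to $3$ is a $3'$-subgroup, and one of order divisible by $3$ is supersolvable.

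Granting this, I would finish with a one-line case split. If $3 \nmid |G|$, then $G$ is a $3'$-group and the first alternative of the statement holds. Otherwise $3 \mid |G|$, and then $G$ is a non-solvable group of order divisible by $3$ all of whose maximal subgroups are supersolvable or $3'$-groups, so Theorem~\ref{main} applies and delivers at once ${\bf F}(G) = \Phi(G) = {\bf O}_{3'}(G)$, ${\bf O}_2(G) \leq {\bf Z}(G)$, and $G/{\bf O}_{3'}(G) \cong {\rm PSL}_2(2^p)$ with $p$ an odd prime (in particular prime, as stated).

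There is really no hard part here: the content is entirely contained in Theorem~\ref{main}, and the only thing to verify is the harmless replacement of ``maximal'' by ``proper'' in the hypothesis. I would also point out, as the argument above makes clear, that the identical reasoning validates the corresponding rephrasing of Theorem~A, since ``every maximal subgroup of order divisible by $3$ is supersolvable'' and ``every proper subgroup of order divisible by $3$ is supersolvable'' are equivalent conditions on any finite group.
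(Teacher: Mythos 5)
Your proposal is correct and follows essentially the same route as the paper: the paper notes (just before the corollary) that the maximal-subgroup and proper-subgroup hypotheses are equivalent, and its proof of the corollary is exactly your one-line reduction, namely that if $G$ is not a $3'$-group then Theorem~\ref{main} applies directly since maximal subgroups are proper. Your extra verification of the equivalence of the two hypotheses is the same observation the paper makes in passing, so there is nothing genuinely different here.
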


\begin{proof}
If $G$ is not a $3'$-group, it is enough to apply Theorem \ref{main}.
\end{proof}

\noindent
{\bf Acknowledgements}
This work is supported by the National Nature Science Fund of China (No. 12471017 and No. 12071181) and the  first named author is  also supported by Generalitat Valenciana, Proyecto CIAICO/2021/193. The  second  named author is also supported by the  Natural Science Research Start-up Foundation of Recruiting Talents of Nanjing University of Posts and Telecommunications (Grant Nos. NY222090, NY222091).

\medskip
\noindent
{\bf Data availability} Data sharing not applicable to this article as no data sets were generated or analyzed during
the current study.

\bigskip
\noindent
{\bf \large Declarations}

\medskip
\noindent
{\bf Conflict of interest} The authors have no conflicts of interest to declare.

\bibliographystyle{plain}

\end{document}